\newtheorem{theorem}{Theorem}
\newtheorem{corollary}[theorem]{Corollary}
\newtheorem{problem}{Problem}
\newtheorem{lemma}[theorem]{Lemma}
\newenvironment {proof} {{\it
Proof.}}{\hspace*{\fill}$\Box$\par\vspace{4mm}}
\newcommand{\bea}{\begin{eqnarray*}}
\newcommand{\eea}{\end{eqnarray*}}
\newcommand{\be}{\begin{equation}}
\newcommand{\ee}{\end{equation}}
\newcommand{\ben}{\begin{eqnarray*}}
\newcommand{\een}{\end{eqnarray*}}
\newcommand{\G}{{\mathscr{G}}}
\date{}
\begin{document}
\title{Linear maps  on nonnegative symmetric  matrices  preserving the independence number}
\author{ Yanan Hu\thanks{Department of Mathematics, East China Normal University, Shanghai 200241, China. (Email: huyanan530@163.com) }, Zejun Huang,\thanks{Corresponding author. College of Mathematics and Statistics, Shenzhen University, Shenzhen 518060, China.  (Email: mathzejun@gmail.com) }  }
\maketitle
\begin{abstract}
 The independence number of a square matrix $A$, denoted by $\alpha(A)$, is the maximum order of its principal zero submatrices. Let $S_n^{+}$ be the set of $n\times n$ nonnegative symmetric matrices with zero trace. Denote by $J_n$  the $n\times n$ matrix with all entries equal to one.  Given any integer $n$, we prove that a   linear map $\phi: S_n^+\rightarrow S_n^+$  satisfies
$$\alpha(\phi(X))= \alpha(X)  {\quad\rm for~ all\quad}X\in S_n^+$$
if and only if there is a permutation matrix $P$   such that
$$\phi(X)=H\circ(P^TXP)\quad { \rm for~ all\quad}X\in S_n^+,$$
where $H=\phi(J_n-I_n)$ with all off-diagonal entries positive.

\end{abstract}
{\bf AMS classifications:} 15A86, 05C50, 05C69

\noindent{\bf Key words:} Independence number; linear map; nonnegative matrix; vertex permutation\\

\section{Introduction and main result}

Linear preserver problems on matrices and operators have a long history,  which concern the characterization of linear maps on matrices or operators preserving special
properties.  In 1897, Frobenius \cite{F} showed that a linear operator
$\phi: M_n\rightarrow M_n$ satisfies
$$\det(\phi(A)) = \det(A) \qquad \hbox{ for all } A \in M_n$$
if and only if  there are $M, N\in M_n$ with $\det(MN) = 1$ such that
$\phi$ has the form
\begin{equation*} \label{standard}
A \mapsto MAN \quad \hbox{ or } \quad A \mapsto MA^tN,
\end{equation*}
where $M_n$ denotes the set of $n\times n$ complex matrices. Since then, lots of  linear preserver problems have been investigated.     There are many new directions and active research on preserver
problems motivated by theory and applications; see \cite{FHLS, FLPS, LP, M7, W}.  Particularly, Minc \cite{HM}  characterized those linear transformations that map nonnegative matrices into nonnegative matrices and preserve the spectrum of each nonnegative matrix. In \cite{BS1, BS2, BS3, H}, linear maps preserving certain combinatorial properties of matrices are studied.

The {\it independence number} of a square matrix $A$, denoted by $\alpha(A)$, is the maximum order of its principal zero submatrices.
Let $S_n^{+}$ be the set of $n\times n$ nonnegative symmetric matrices with zero trace.  In this paper, we study the following problem.

\begin{problem}\label{p2}
	Let $n$  be  a positive integer. Characterize the linear map $\phi: S_n^+\rightarrow S_n^+$ such that
	\begin{equation}\label{eqhm1}
	\alpha(\phi(X))= \alpha(X) {~~~~\rm for~ all\quad }X\in S_n^+,
	\end{equation}
\end{problem}

We solve Problem 1 in this paper. Let $J_n$ be the $n\times n$ matrix with all entries equal to one.  Denote by $A\circ B$ the Hadamard product of two matrices $A$ and $B$. Our main result states as follows.

\begin{theorem}\label{mth1}
	Let $n$   be a positive integer.  Then  a linear map $\phi: S_n^+\rightarrow S_n^+$  satisfies    (\ref{eqhm1}) if and only if there is a permutation matrix $P$   such that
	$$\phi(X)=H\circ(P^TXP)\quad { \rm for~ all\quad}X\in S_n^+,$$
where $H=\phi(J_n-I_n)$ with all off-diagonal entries positive.
\end{theorem}

 Given a matrix $A=(a_{ij})\in S^+_n$, we define its graph $G(A)$ by the graph with vertex set $\langle n\rangle=\{1,2,\ldots,n\}$ and edge set $\{(i,j): a_{ij}\ne 0\}$. Conversely, we can define the adjacency matrix of a graph $G$, which is denoted by $A(G)$. Our strategy is to transform Problem 1 to a linear preserver problem on graphs by considering the graphs of matrices in $S^+_n$. We will   study a linear preserver problem on graphs in Section 2,  and then present the proof of Theorem \ref{mth1} in Section 3.

\section{Linear maps on graphs preserving a given independence number }

In this section, we study a linear preserver problem on graphs.
Graphs in this paper are simple.  We denote by $\langle n\rangle=\{1,2,\ldots,n\}$ and by  $\G_n$ the set of graphs with  vertex set $\langle n\rangle$. For a graph $G$, we always denote its vertex set by $V(G)$ and denote its edge set by $E(G)$.  The {\it order} of a graph is its number of vertices and the {\it size} of a graph is its number of edges.

  For two graphs $G_1,G_2\in \G_n$, its {\it union} $G_1\cup G_2$ is the graph with vertex set $\langle n\rangle$ and edge set $E(G_1)\cup E(G_2)$.
 If $V(G_1)\subseteq V(G_2)$ and $E(G_1)\subseteq E(G_2)$, then $G_1$ is said to be a {\it subgraph} of $G_2$, which is denoted as $G_1\subseteq G_2$.
  An {\it independent set} of a graph $G$ is a subset of $V(G)$ such that no two vertices in the subset are adjacent.  An independent set of a graph $G$ is said to be {\it maximum} if $G$ has no independent set with more vertices.
  The {\it independence number} of a graph $G$, denoted by $\alpha(G)$, is  the cardinality of  a maximum independent  set of $G$.

  A map $\phi: \G_n\rightarrow \G_n$ is said to be {\it linear} if
  $$\phi(G_1\cup G_2)=\phi(G_1)\cup \phi(G_2)~~{\rm for ~all~}G_1,G_2\in \G_n.$$
 Moreover, if $\phi$ maps the complete graph $K_n$ to itself, i.e. $\phi(K_n)=K_n$, then we call $\phi$ a {\it complete linear map}.

 Linear maps on graphs were introduced by Hershkowitz in \cite {H}, in which he characterized the  linear maps that map the set of all graphs
 which contain no cycle of length greater than or equal to $k$ into or onto
 itself.
 Following  Hershkowitz's direction, we investigate the following problem on graphs in this section.

\begin{problem}\label{p1}
Given positive integers $n$ and $t$, determine the  complete linear maps $\phi: \G_n\rightarrow \G_n$ such that
 \begin{equation}\label{h1}
 \alpha(\phi(G))=t\quad { if~and ~only ~if\quad } \alpha(G)=t {\quad\rm for~ all\quad}G\in \G_n.
 \end{equation}
\end{problem}

Note that the only graph in $\G_n$ with   independence number  $n$ is the empty graph. The above problem is trivial when $t=n$. So we only consider the case $1\le t\le n-1$.

Given $i,j\in\langle n\rangle$, denote by $G_{ij}$ the graph in $\G_n$ with edge set $\{(i,j)\}$. Let $\phi:\G_n\rightarrow\G_n$  be a linear map. If there is a permutation $\sigma$ of $\langle n\rangle $ such that
$$\phi(G_{ij})=G_{\sigma(i)\sigma(j)}~{\rm for~all~}i,j\in \langle n\rangle,$$
then $\phi$ is said to be a {\it vertex permutation};
if $\phi(K_n)=K_n$ and each $\phi(G_{ij})$ contains exactly one edge for all distinct  $i,j\in \langle n\rangle$,
 then $\phi$ is said to be an {\it edge permutation}.

We have the following solution to Problem 2.
\begin{theorem}\label{th1}
	Let $n,t$ be positive integers such that $ t\le n-1$.  Then a complete linear map $\phi: \G_n\rightarrow \G_n$  satisfies (\ref{h1})   if and only if one of the following holds.
	\begin{itemize}
		\item [(i)] $t=1$ and $\phi$ is an edge permutation.
		\item [(ii)] $t\ge 2$ and $\phi$ is a vertex permutation.
	\end{itemize}
\end{theorem}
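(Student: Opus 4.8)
The plan is to reduce everything to the images of the one-edge graphs $G_{ij}$. Since $\phi$ is union-preserving, it is determined by the family $\{\phi(G_{ij})\}$ via $\phi(G)=\bigcup_{(i,j)\in E(G)}\phi(G_{ij})$, and it is monotone: $G_1\subseteq G_2$ forces $\phi(G_1)\subseteq\phi(G_2)$. For the \emph{if} direction, a vertex permutation preserves $\alpha$ exactly and fixes $K_n$, so it satisfies (\ref{h1}) for every $t$, giving (ii); and for an edge permutation the $\binom{n}{2}$ one-edge images cover $K_n=\phi(K_n)$, hence form a bijection of $E(K_n)$, so $\phi(G)=K_n$ iff $G=K_n$, and since $\alpha(G)=1\iff G=K_n$ this gives (i).

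For the converse with $t=1$ I argue by a private-edge count. Because $\alpha(K_n-G_{ij})=2\neq 1$, (\ref{h1}) gives $\phi(K_n-G_{ij})\neq K_n=\phi(K_n)$, so $\bigcup_{e\neq(i,j)}\phi(G_e)$ omits some edge of $K_n$; that edge lies in $\phi(G_{ij})$ but in no other image, i.e.\ it is private to $\phi(G_{ij})$. Distinct edges have distinct private edges, so the $\binom{n}{2}$ private edges exhaust $E(K_n)$; were some $\phi(G_{ij})$ to carry a second edge, that edge would be private to another image, a contradiction. Hence each $\phi(G_{ij})$ is a single edge and $\phi$ is an edge permutation, proving (i).

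The heart of the matter is $t\ge 2$. For $S\subseteq\langle n\rangle$ let $\Gamma_S$ be the graph in which $S$ is independent and all remaining pairs are edges; one checks $\alpha(\Gamma_S)=|S|$ and that the $\Gamma_S$ with $|S|=t$ are exactly the edge-maximal graphs with independence number $t$. The key device is localization: fix $S'$ with $|S'|=t+1$; for any set $F$ of edges inside $S'$ a direct computation gives $\alpha(\Gamma_{S'}\cup F)=\alpha(S',F)$, the independence number of $F$ regarded as a graph on the $t+1$ vertices $S'$. Since a graph on $t+1$ vertices has independence number $t$ exactly when it is a nonempty star, (\ref{h1}) becomes
\[
\alpha\!\Big(\phi(\Gamma_{S'})\cup\bigcup_{e\in F}\phi(G_e)\Big)=t
\quad\Longleftrightarrow\quad
F\text{ is a nonempty star on }S';
\]
call this equivalence $(\star)$. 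Thus on each $(t+1)$-set the problem reduces to recognizing stars — precisely the shape of the boundary case $t=n-1$, where $S'=\langle n\rangle$, $\phi(\Gamma_{S'})=\phi(\emptyset)=\emptyset$, and each $\phi(G_e)$ is forced to be a star. There the single-edge property follows from a clean trick (for $n\ge 4$; small cases are direct): if $\phi(G_{ij})$ were a star with center $c$ and at least two leaves, then $(i,k)$ and $(j,l)$ — adjacent to $(i,j)$ but disjoint from each other — would have images lying in the star at $c$, so their union would be a star, contradicting that the two-edge matching $G_{ik}\cup G_{jl}$ has independence number $t-1\neq t$.

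For general $t$ the difficulty is that the background $P:=\phi(\Gamma_{S'})$ need not be empty; the equivalence only forces $\alpha(P)\ge t+1$, since $\alpha(P)\le t-1$ would push every left-hand side below $t$. My plan is to localize the target as well: fix an independent set $W$ of $P$ with $|W|=t+1$ and, for each edge $e\subseteq S'$, restrict the image to $W$ by considering $\phi(G_e)\cap\binom{W}{2}$. A nonempty star reduces $\alpha(P)$ to exactly $t$ only by placing an edge inside every $(t+1)$-element independent set of $P$, in particular inside $W$; running this hitting condition through $(\star)$ should show that $e\mapsto\phi(G_e)\cap\binom{W}{2}$ sends single edges to single edges and stars to stars on $W$, which is just the solved empty-background problem transported to $W$ and yields a vertex bijection $S'\to W$ realizing $\phi$ on the edges inside $S'$. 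Gluing then finishes: any two $(t+1)$-sets meeting in at least two vertices share an edge, so the local bijections agree on overlaps, and as $\langle n\rangle$ is connected under this overlap relation (using $t+1\ge 3$) they assemble into one permutation $\sigma$ with $\phi(G_{ij})=G_{\sigma(i)\sigma(j)}$. The step I expect to be the main obstacle is exactly the control of $P=\phi(\Gamma_{S'})$: converting the coarse bound $\alpha(P)\ge t+1$ together with the hitting conditions from $(\star)$ into the statement that each $\phi(G_e)$ meets a suitable $(t+1)$-independent set of $P$ in a single edge. Everything downstream is bookkeeping, but this hitting-set analysis of the $(t+1)$-independent sets of $P$ is where the genuine combinatorial work lies.
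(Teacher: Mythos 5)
Your ``if'' direction, your $t=1$ argument (the private-edge count is clean and correct, and is actually more elementary than the paper's route through Lemmas \ref{lehh1}--\ref{leh2}), and your treatment of the boundary case $t=n-1$ all hold up. But the central case $2\le t\le n-2$ is a plan rather than a proof, and the step you yourself flag as ``where the genuine combinatorial work lies'' is a real gap, not bookkeeping. Concretely: from $(\star)$ with $F=\{e\}$ you get $\alpha\bigl(P\cup\phi(G_e)\bigr)=t$ where $P=\phi(\Gamma_{S'})$ and $\alpha(P)\ge t+1$. This says only that $\phi(G_e)$, sitting on top of $P$, destroys \emph{every} independent set of $P$ of size $t+1$ (and larger, if any exist). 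It does not say that $\phi(G_e)$ meets your one chosen independent set $W$ in a star, let alone a single edge: $\phi(G_e)\cap\binom{W}{2}$ could consist of two disjoint edges (killing all $t$-subsets of $W$ is not required --- only the single $(t+1)$-set $W$ must be hit), while other $(t{+}1)$-independent sets of $P$ are hit elsewhere. So the map $e\mapsto\phi(G_e)\cap\binom{W}{2}$ need not satisfy the hypotheses of your ``empty-background'' case, and the transport to $W$ breaks down. The subsequent gluing of local bijections $S'\to W$ inherits this problem and adds another: $W$ depends on $S'$ (indeed on $P=\phi(\Gamma_{S'})$), so it is not clear the local targets are even compatible on overlaps.

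For comparison, the paper closes exactly this gap with two global tools you do not have: a Tur\'an-type counting argument (Corollary \ref{co1}: any graph with independence number $t$ has at least $|E(T'(n,t))|$ edges, with equality only for $T'(n,t)$), used in Lemmas \ref{hle2} and \ref{leh4} to show that if some $\phi(G_{ij})$ carried two disjoint edges then a copy of $T'(n,t)$ could be covered by the images of fewer than $|E(T'(n,t))|$ single-edge graphs, forcing a preimage with independence number $>t$ mapping to a graph with independence number $\le t$; and the common-vertex lemma (Lemma \ref{leh1}), used in Lemma \ref{leh2} to kill the triangle and star cases by repeatedly adding an edge inside the intersection of all maximum independent sets so that the independence number of the image drops strictly faster than that of the preimage. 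Some argument of comparable global strength is needed where your proposal currently says ``should show''; as written, the necessity proof for $2\le t\le n-2$ is not established.
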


To prove Theorem \ref{th1}, we first present some elementary properties on the independence number  of graphs, which are of independent interests.  For two sets $V_1, V_2$,   we define
$$V_1-V_2\equiv\{x: x\in V_1, x\not\in V_2\}.$$  We denote by $| V|$ the cardinality of a set  $V$.
The following lemma can be deduced from Lemma 2.1 of \cite {LM}. For completeness, we give a new proof.

 \begin{lemma}\label{leh1}
 	Let $G\in \G_n$ with independence number  $\alpha$. Then the maximum independent sets of $G$ have at least $2\alpha-n$ common vertices.
 \end{lemma}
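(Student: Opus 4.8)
A quick inclusion--exclusion shows that any two maximum independent sets $A,B$ satisfy $|A\cap B|=2\alpha-|A\cup B|\ge 2\alpha-n$; the assertion, however, concerns the intersection of \emph{all} maximum independent sets, so a more global argument is needed. The plan is to fix one maximum independent set $A$ and write $W=\langle n\rangle\setminus A$, a set of size $n-\alpha$. Note that every neighbour of a vertex of $A$ lies in $W$ (since $A$ is independent), and every vertex of $W$ has a neighbour in $A$ (since $A$ is maximal). Let $C$ be the intersection of all maximum independent sets; as $A$ is one of them, $C\subseteq A$, and $M:=A\setminus C$ is exactly the set of vertices of $A$ that are omitted by at least one maximum independent set. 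Then $|C|=\alpha-|M|$, so the desired bound $|C|\ge 2\alpha-n$ is equivalent to $|M|\le n-\alpha=|W|$. I will prove the latter by producing a matching in $G$ that saturates $M$ and maps it into $W$.

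The first ingredient is an exchange lemma: for any two maximum independent sets $A,B$, the bipartite subgraph of $G$ between $A\setminus B$ and $B\setminus A$ has a perfect matching. The two sides have equal size $\alpha-|A\cap B|$, so by Hall's theorem it suffices to check the neighbourhood condition on one side. If some $X\subseteq B\setminus A$ had $|N(X)\cap(A\setminus B)|<|X|$, then---using that $X\subseteq B$ forces $N(X)\cap A\subseteq A\setminus B$---the set $(A\setminus N(X))\cup X$ would be independent and strictly larger than $A$, contradicting maximality.

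The heart of the proof, and the step I expect to be the main obstacle, is to verify Hall's condition for $M$ itself, namely $|N(S)\cap W|\ge|S|$ for every $S\subseteq M$. The difficulty is that the defining property of $M$ is vertex-by-vertex (each $v\in M$ is omitted by \emph{some} maximum independent set), while in general no single maximum independent set omits all of $S$ at once, so one cannot read off the whole matching from one set. I would instead argue by minimal counterexample. Suppose $S\subseteq M$ is a smallest set with $|T|<|S|$, where $T:=N(S)\cap W$. Choose $v\in S$ and a maximum independent set $B$ with $v\notin B$; applying the exchange lemma to $A$ and $B$ gives a matching that injects $S\setminus B$ into $B\setminus A$, and since each image is a neighbour of a vertex of $S$ it lands in $T\cap B$, whence $|T\cap B|\ge|S\setminus B|$. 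Together with $|T|<|S|$ this forces $|T\setminus B|\le|S\cap B|-1$. Now $S\cap B$ is nonempty (otherwise $|S|=|S\setminus B|\le|T|<|S|$) and a proper subset of $S$ (as $v\in S\setminus B$); moreover every vertex of $S\cap B\subseteq B$ has all its neighbours outside $B$, hence in $T\setminus B$, so $|N(S\cap B)\cap W|\le|T\setminus B|<|S\cap B|$. Thus $S\cap B$ is a strictly smaller counterexample, contradicting minimality.

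With Hall's condition established, a matching saturating $M$ and landing in $W$ exists, so $|M|\le|W|=n-\alpha$ and therefore $|C|=\alpha-|M|\ge 2\alpha-n$, as required. I expect the exchange lemma to be routine bookkeeping; the genuine work is the minimal-counterexample argument, which is precisely what converts the per-vertex avoidability encoded in $M$ into the uniform Hall inequality.
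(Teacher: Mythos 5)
Your proof is correct, but it takes a genuinely different route from the paper's. The paper enumerates \emph{all} maximum independent sets $V_1,\dots,V_k$ and runs a single telescoping count: setting $q_i=|V_1\cap\cdots\cap V_i|$ and $p_i=|V_i-V_1-\cdots-V_{i-1}|$, it proves $p_i\ge q_{i-1}-q_i$ by exhibiting, whenever this fails, the independent set $\bigl(V_1\cap\cdots\cap V_{i-1}-V_i\bigr)\cup\bigl(V_i\cap(V_1\cup\cdots\cup V_{i-1})\bigr)$ of size exceeding $\alpha$, and then sums $n\ge|V_1\cup\cdots\cup V_k|=2\alpha-q_2+p_3+\cdots+p_k\ge 2\alpha-q_k$. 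You instead fix one maximum independent set $A$, let $M\subseteq A$ be the set of vertices omitted by at least one maximum independent set, and bound $|M|\le n-\alpha$ by constructing a matching from $M$ into $\langle n\rangle\setminus A$, verifying Hall's condition through a minimal-counterexample argument that leans on an exchange lemma (itself proved via Hall). Both proofs are exchange arguments at heart --- your exchange lemma and the paper's inequality $p_i\ge q_{i-1}-q_i$ play the same role --- but the paper's version is more elementary (pure counting, no Hall's theorem) and considerably shorter, while yours buys extra structural information, namely an explicit injection of the ``non-universal'' vertices of $A$ into its complement, and avoids having to order the maximum independent sets. I checked the delicate step (that $S\cap B$ is a nonempty proper subset of $S$ violating Hall's condition) and it goes through. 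Note also that neither argument actually uses the hypothesis $\alpha\ge n/2+1$; it only serves to make the bound $2\alpha-n$ nontrivial.
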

 \begin{proof}
 	Suppose $\{V_1,V_2,\ldots,V_k\}$ is the set of all the maximum independent sets of $G$.  Then
 	$$|V_i|= \alpha {\rm~ for~} i=1,\ldots,k.$$
 	Let
 	$$q_i=|V_1\cap V_2\cap\cdots\cap V_i|  {\rm ~~for~} i=2,\ldots,k$$  and
 	$$p_i=|V_i-  V_1- V_2-\cdots-  V_{i-1}|{\rm ~~for~} i=3,\ldots,k.$$
 It suffices to prove  $q_k\geq2\alpha-n$.
 	
 	We claim that
 	$$p_i\geq q_{i-1}-q_i {\rm ~for~} i=3,\ldots,k.$$
 	Otherwise, suppose $p_i<q_{i-1}-q_i$ for some $i\in \{3,\ldots,k\}$. Let
 	$$S=V_1\cap V_2\cap \cdots\cap V_{i-1}-V_i {\rm ~and~} T=V_i\cap (V_1\cup V_2\cup \cdots\cup V_{i-1}).$$
 	Then $|S|=q_{i-1}-q_i$, $|T|=|V_i|-p_i$ and $S\cup T$ is an independent set of $G$.  Moreover, we have
 	$$|S\cup T|=|S|+|T|=|V_i|-p_i+q_{i-1}-q_i>\alpha,$$
 	a contradiction.
 	
 	It follows that
 	\begin{eqnarray*}
 		n&\geq&|V_1\cup V_2\cup\cdots\cup V_k|\\
 		&=&|V_1|+|V_2-V_1|+|V_3-V_1-V_2|+\cdots+|V_k-\bigcup_{j=1}^{k-1}V_j|\\
 		&=&\alpha+\alpha-q_2+p_3+\cdots+p_k\\
 		&\geq& 2\alpha-q_2+(q_2-q_3)+\cdots+(q_{k-2}-q_{k-1})+(q_{k-1}-q_k)\\
 		&=&2\alpha-q_k.
 	\end{eqnarray*}
 	Hence,  $q_k\geq2\alpha-n$.
 \end{proof}

\begin{lemma}\label{le31}
	Let $G_1,G_2\in \G_n$. Then
	$$\alpha(G_1\cup G_2)\ge \alpha(G_1)+\alpha(G_2)-n.$$
\end{lemma}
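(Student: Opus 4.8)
The plan is to exploit the fact that independence in the union graph is exactly simultaneous independence in each factor. Concretely, by the definition of $G_1\cup G_2$, a vertex subset $W\subseteq\langle n\rangle$ is an independent set of $G_1\cup G_2$ if and only if no edge of $E(G_1)\cup E(G_2)$ joins two vertices of $W$, which is the same as saying $W$ is independent in $G_1$ \emph{and} independent in $G_2$. In particular, any subset of an independent set remains independent, so the intersection of an independent set of $G_1$ with an independent set of $G_2$ is independent in $G_1\cup G_2$.

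With this observation the argument is short. First I would fix a maximum independent set $A$ of $G_1$ and a maximum independent set $B$ of $G_2$, so that $|A|=\alpha(G_1)$ and $|B|=\alpha(G_2)$. Since $A\cap B\subseteq A$ it is independent in $G_1$, and since $A\cap B\subseteq B$ it is independent in $G_2$; hence $A\cap B$ is an independent set of $G_1\cup G_2$, giving
$$\alpha(G_1\cup G_2)\ge |A\cap B|.$$

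Next I would bound $|A\cap B|$ from below by inclusion--exclusion together with the trivial bound $|A\cup B|\le n$ coming from $A\cup B\subseteq\langle n\rangle$:
$$|A\cap B|=|A|+|B|-|A\cup B|\ge \alpha(G_1)+\alpha(G_2)-n.$$
Combining the two displayed inequalities yields $\alpha(G_1\cup G_2)\ge\alpha(G_1)+\alpha(G_2)-n$, which is the claim.

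There is essentially no serious obstacle here; the only point requiring care is the structural observation in the first paragraph, namely that independence in $G_1\cup G_2$ reduces to joint independence in the two summands so that the \emph{intersection} $A\cap B$ (rather than a union) is the right set to test. Everything else is a one-line counting estimate, and no appeal to Lemma \ref{leh1} or any hypothesis on the size of $\alpha$ is needed.
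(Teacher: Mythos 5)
Your proof is correct and is essentially identical to the paper's: both take maximum independent sets of $G_1$ and $G_2$, observe that their intersection is independent in $G_1\cup G_2$, and conclude by inclusion--exclusion with $|A\cup B|\le n$. No further comment is needed.
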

\begin{proof}
Suppose $V_1$ and $V_2$ are maximum independent sets of $G_1$ and $G_2$, respectively. Then $V_1\cap V_2$ is an independent set of $G_1\cup G_2$ and
$$\alpha(G_1\cup G_2)\ge |V_1\cap V_2|=|V_1|+|V_2|-|V_1\cup V_2|\ge \alpha(G_1)+\alpha(G_2)-n.$$
\end{proof}

Especially,  let $G'$ be a graph obtained by adding an edge to a graph $G$. Then
 \begin{equation}\label{eqh1}
 \alpha(G')=\alpha(G)~{\rm or~}\alpha(G')=\alpha(G)-1.
 \end{equation}

The (unlabled) {\it Tur\'an graph} $T(n,r)$  is a complete $r$-partite graph on $n$ vertices in which each vertex partition has size $ \lceil n/r\rceil$ or $\lfloor n/r\rfloor$.  The {\it complement} of a graph $G$  is the graph  with vertex set $V(G)$ such that two distinct vertices are adjacent if and only if they are not adjacent in $G$. We denote by $T'(n,r)$ the complement of  $T(n,r)$,  which is the union of  $r$ complete graphs whose orders are  $ \lceil n/r\rceil$ or $\lfloor n/r\rfloor$.

An {\it isomorphism} of graphs $G$ and $H$ is a bijection $f: V(G)\rightarrow V(H)$ such that
$$(u,v)\in E(G) {\rm~if~ and ~only~ if~} (f(u),f(v))\in E(H)~{\rm ~for~ all~} u,v\in V(G).$$  If an isomorphism exists between two graphs $G$ and $H$, then $G$ and $H$  are said to be {\it isomorphic} and denoted as $G\cong H$.
The following result is well known \cite{TP}.
 \begin{lemma}[Tur\'an]\label{tu1}
 If $G\in \G_n$ is $K_{r+1}$-free with $m$ edges, then $$m\leq (1-\frac{1}{r})\frac{n^2}{2}$$
with equality if and only if $G\cong T(n,r)$.
 \end{lemma}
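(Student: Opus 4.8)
The plan is to prove this via Zykov's symmetrization, an argument that simultaneously yields the extremal bound and the characterization of the extremizer. Fix $n$ and $r$, and let $G$ be a $K_{r+1}$-free graph on $\langle n\rangle$ with the maximum possible number of edges $m$. It suffices to show $G\cong T(n,r)$: the value $(1-\frac1r)\frac{n^2}{2}$ then follows by directly counting the edges of $T(n,r)$, and the inequality for arbitrary $K_{r+1}$-free graphs is immediate from maximality of $m$.

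The key structural step is the claim that in such an extremal $G$ the non-adjacency relation is transitive, i.e. there is no triple $u,v,w$ with $vw\in E(G)$ but $uv,uw\notin E(G)$. I would argue by contradiction. If $\deg(u)<\deg(v)$ (symmetrically if $\deg(u)<\deg(w)$), then I delete $u$ and replace it by a \emph{twin} of $v$, a new vertex with exactly $v$'s neighborhood and non-adjacent to $v$. Two twins of $v$ can never lie in a common clique, so $K_{r+1}$-freeness is preserved, while the edge count rises by $\deg(v)-\deg(u)>0$, contradicting maximality. Otherwise $\deg(u)\ge\deg(v)$ and $\deg(u)\ge\deg(w)$, and I delete both $v$ and $w$ and insert two twins of $u$; again no $K_{r+1}$ is created, and the edge count changes by $2\deg(u)-\deg(v)-\deg(w)+1>0$, where the $+1$ accounts for the single destroyed edge $vw$. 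Both cases contradict extremality, proving the claim.

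Transitivity of non-adjacency means the relation partitions $V(G)$ into classes, each an independent set, with every edge present between distinct classes; hence $G$ is complete multipartite. Being $K_{r+1}$-free, it has at most $r$ parts. A short convexity argument then pins down the balance: if two part sizes differed by $2$ or more, moving a single vertex from the larger part to the smaller one would strictly increase the number of crossing edges, so the extremal configuration has exactly $r$ parts of sizes $\lceil n/r\rceil$ or $\lfloor n/r\rfloor$. This forces $G\cong T(n,r)$, and I would finish by computing $|E(T(n,r))|=(1-\frac1r)\frac{n^2}{2}$ in the divisible case $r\mid n$ and checking the stated bound persists for general $n$.

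The main obstacle is the equality characterization, not the inequality: a quick Cauchy–Schwarz estimate on the degree sequence, or an induction that removes a maximum $r$-clique, delivers $m\le(1-\frac1r)\frac{n^2}{2}$ with little effort, but showing that equality holds \emph{only} for $T(n,r)$ is exactly what the symmetrization structure above provides. The delicate points are verifying that each twin-replacement operation genuinely avoids creating a $K_{r+1}$ and correctly accounting for the edge $vw$ in the second case; these are where a bookkeeping error would most easily slip in.
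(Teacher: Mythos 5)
The paper offers no proof of this lemma at all---it is quoted as a known result with a bare citation to Tur\'an's 1941 paper---so there is no in-house argument to compare against; what matters is whether your proof stands alone, and it essentially does. Your Zykov symmetrization is a correct and standard route: the bookkeeping in both replacement steps is right (in the first case the gain is $\deg(v)-\deg(u)$ precisely because $u\not\sim v$, and in the second the gain $2\deg(u)-\deg(v)-\deg(w)+1$ correctly credits the one destroyed edge $vw$), the twin substitutions preserve $K_{r+1}$-freeness because a clique can contain at most one vertex from a set of pairwise non-adjacent vertices with identical neighborhoods, and transitivity of non-adjacency forces the complete multipartite structure. This choice of proof is also well matched to how the lemma is used downstream: Corollary \ref{co1} and Lemma \ref{leh4} need the \emph{uniqueness} of the extremal graph (``any minimum graph with independence number $t$ is isomorphic to $T'(n,t)$''), which the symmetrization delivers and which a quick Cauchy--Schwarz degree estimate would not.

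Two small points to tighten. First, your convexity step only balances the sizes of the nonempty parts; to conclude that the extremal graph has exactly $r$ parts you must also observe that splitting a part of size at least two into two nonempty parts strictly increases the number of crossing edges, so an edge-maximal $K_{r+1}$-free complete multipartite graph uses all $r$ parts whenever $n\ge r$. Second, when $r\nmid n$ the quantity $(1-\frac{1}{r})\frac{n^2}{2}$ strictly exceeds $|E(T(n,r))|$, so the ``equality if and only if'' clause as stated in the lemma is literally accurate only when $r\mid n$; your argument in fact establishes the sharper and cleaner statement $m\le |E(T(n,r))|$ with equality exactly for $G\cong T(n,r)$, which is the form the rest of the paper actually relies on. Neither point is a real gap; both are one-sentence repairs.
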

 The {\it clique number} of a graph $G$, denoted by $\omega(G)$, is the maximum number of vertices in a complete subgraph of $G$.
Since the independence number  of a graph is the clique number of its complement, by Lemma \ref{tu1} we have the following corollary.
 \begin{corollary}\label{co1}
Let $G\in \G_n$  with $m$ edges and $\alpha(G)=r$. Then $$m\ge  \frac{n^2}{2r}-\frac{n}{2}$$
	with equality  if and only if $G\cong T'(n,r)$.
\end{corollary}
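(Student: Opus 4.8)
The plan is to derive this directly from Tur\'an's theorem (Lemma \ref{tu1}) by passing to the complement. First I would let $\overline{G}$ denote the complement of $G$ and record the fact already noted just before the statement, namely that the independence number of a graph equals the clique number of its complement, so that $\omega(\overline{G})=\alpha(G)=r$. In particular $\overline{G}$ contains no $K_{r+1}$, hence $\overline{G}$ is $K_{r+1}$-free and Lemma \ref{tu1} is applicable to it.

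Writing $\overline{m}$ for the number of edges of $\overline{G}$, Tur\'an's theorem then gives $\overline{m}\le (1-\frac1r)\frac{n^2}{2}$. Since the edge sets of $G$ and $\overline{G}$ partition the edge set of $K_n$, we have $m+\overline{m}=\binom{n}{2}=\frac{n^2-n}{2}$. Substituting the upper bound for $\overline{m}$ and simplifying the arithmetic gives $m=\binom{n}{2}-\overline{m}\ge \frac{n^2-n}{2}-(1-\frac1r)\frac{n^2}{2}=\frac{n^2}{2r}-\frac{n}{2}$, which is exactly the claimed inequality.

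For the equality statement I would simply track when the bound $\overline{m}\le (1-\frac1r)\frac{n^2}{2}$ is attained. By the equality clause of Lemma \ref{tu1}, this happens if and only if $\overline{G}\cong T(n,r)$; taking complements, and using the definition of $T'(n,r)$ as the complement of the Tur\'an graph $T(n,r)$, this is equivalent to $G\cong T'(n,r)$. This closes the equality case.

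I do not expect a genuine obstacle here, since the argument is a routine complementation of Tur\'an's theorem. The only points requiring a little care are confirming that $\alpha(G)=r$ indeed forces $\overline{G}$ to be $K_{r+1}$-free (for which one only needs $\omega(\overline{G})\le r$, which follows from $\omega(\overline{G})=r$), and checking that the equality characterization survives complementation, i.e. that $T(n,r)$ has $\omega(T(n,r))=r$ so that the extremal complement is consistent with the constraint $\alpha(G)=r$.
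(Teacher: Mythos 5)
Your proposal is correct and is exactly the argument the paper intends: the corollary is stated as an immediate consequence of Lemma \ref{tu1} via the observation that $\alpha(G)=\omega(\overline{G})$, and your complementation plus the edge count $m+\overline{m}=\binom{n}{2}$ fills in the routine arithmetic and the equality case precisely as the authors leave implicit.
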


The following lemmas are also needed in the proof of Theorem \ref{th1}.
  \begin{lemma}\label{lehh1}
 	Let $n,t$ be integers such that  $1\le t\le n-1$. Suppose $\phi: \G_n\rightarrow \G_n$ is a complete linear map satisfying (\ref{h1}). Then $\phi(G_{ij})$ is not an empty graph  for all $i,j\in\langle n\rangle$ with $i\neq j.$
 \end{lemma}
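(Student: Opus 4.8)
The plan is to argue by contradiction, exploiting the elementary fact that deleting an edge whose image under $\phi$ is empty cannot change the image of a graph. Suppose, contrary to the claim, that $\phi(G_{ij})$ is the empty graph for some fixed distinct $i,j\in\langle n\rangle$. The first observation is that for every $H\in\G_n$ we have $\phi(H\cup G_{ij})=\phi(H)\cup\phi(G_{ij})=\phi(H)$ by linearity. Consequently, if $G$ is any graph containing the edge $(i,j)$ and $G'$ denotes the graph obtained from $G$ by deleting this single edge, then $G=G'\cup G_{ij}$ forces $\phi(G)=\phi(G')$, and in particular $\alpha(\phi(G))=\alpha(\phi(G'))$.

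Next I would manufacture a single witness graph $G$ for which this forced equality is incompatible with (\ref{h1}). The aim is to produce $G$ containing the edge $(i,j)$ with $\alpha(G)=t$ while $\alpha(G')=t+1$. Since $t\le n-1$, we may choose a vertex set $W\subseteq\langle n\rangle$ with $i,j\in W$ and $|W|=t+1$. Let $G$ be the graph whose only edge inside $W$ is $(i,j)$, and in which every vertex of $\langle n\rangle - W$ is joined to all the remaining $n-1$ vertices. Each vertex outside $W$ is then adjacent to everything, so it belongs to no independent set of size greater than one; meanwhile an independent set contained in $W$ must avoid the edge $(i,j)$ and hence has at most $t$ vertices, a bound attained by $W$ minus one endpoint of $(i,j)$. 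Thus $\alpha(G)=t$. Deleting the edge $(i,j)$ makes all of $W$ independent, while the outside vertices remain universally adjacent, so $\alpha(G')=t+1$.

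Now I would combine the two halves. Applying (\ref{h1}) to $G$, where $\alpha(G)=t$, gives $\alpha(\phi(G))=t$; applying the contrapositive of (\ref{h1}) to $G'$, where $\alpha(G')=t+1\neq t$, gives $\alpha(\phi(G'))\neq t$. This contradicts the equality $\alpha(\phi(G))=\alpha(\phi(G'))$ established in the first step, and the contradiction shows that $\phi(G_{ij})$ cannot be empty for any distinct $i,j$.

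The genuinely delicate point is the construction of the witness graph: for every admissible $t$ one must exhibit a graph whose independence number is exactly $t$ but which jumps to $t+1$ after removing one prescribed edge. The device of making all vertices outside $W$ universally adjacent handles this uniformly, and the hypothesis $t\le n-1$ is precisely what guarantees that a set $W$ of size $t+1$ exists inside $\langle n\rangle$; once the witness is in hand, the remaining steps are immediate consequences of linearity and the defining property (\ref{h1}).
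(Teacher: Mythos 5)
Your proof is correct and follows essentially the same strategy as the paper: observe that an empty $\phi(G_{ij})$ makes the edge $(i,j)$ invisible to $\phi$, then exhibit a graph $G$ containing $(i,j)$ with $\alpha(G)=t$ whose edge-deleted version $G'$ has $\alpha(G')=t+1$, contradicting (\ref{h1}). The only difference is cosmetic: the paper takes $G$ to be an edge-minimal graph with independence number $t$ containing $(i,j)$ (implicitly a relabelled $T'(n,t)$), whereas you build an explicit witness with a clique-dominated complement of $W$; your version spells out the verification that the paper leaves to the reader.
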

 \begin{proof}
 	Suppose there exist  distinct $i,j\in \langle n\rangle$ such that $E(\phi(G_{ij}))= \emptyset$. Let $G\in\G_n$ be the smallest graph with independence number $t$ such that $(i,j)\in E(G)$. Then  $\alpha(\phi(G))=t$. Let $G'=G-G_{ij}$.  Then $\alpha(G')=t+1$ and $\alpha(\phi(G'))=\alpha(\phi(G))=t$, a contradiction.
 	Hence,
 	$$E(\phi(G_{ij}))\neq \emptyset {\rm ~ for ~all~} i,j\in\langle n\rangle,i\neq j.$$
 \end{proof}
Two distinct edges are said to be {\it  adjacent} if they share a common vertex. Otherwise  they are said to be {\it separate}.
\begin{lemma}\label{hle2}
		Let $n,t$ be integers such that  $1\le t\le n-1$. Suppose $\phi: \G_n\rightarrow \G_n$ is a complete linear map satisfying (\ref{h1}).  Then $\phi(G_{ij})$ contains no separate edges for all $ i,j\in  \langle n\rangle$ with $i\ne j$.
 \end{lemma}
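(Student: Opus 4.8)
The plan is to argue by contradiction. Suppose some image $\phi(G_{rs})$ contains two separate edges $(a,b)$ and $(c,d)$, where $a,b,c,d$ are four distinct vertices. These two disjoint edges form a matching of size two, so any independent set of $\phi(G_{rs})$ must omit at least one endpoint of each, forcing $\alpha(\phi(G_{rs}))\le n-2$. Since a single edge satisfies $\alpha(G_{rs})=n-1$, this already settles the extreme case: when $t=n-1$ we have $\alpha(G_{rs})=n-1=t$, so (\ref{h1}) would demand $\alpha(\phi(G_{rs}))=n-1$, contradicting $\alpha(\phi(G_{rs}))\le n-2$. The remaining task is to propagate this ``a matching destroys too much independence'' phenomenon down to an arbitrary target value $t$.

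To do so I would surround the edge $(r,s)$ with a rigid gadget. Let $I$ be a set of $t+1$ vertices containing $r$ and $s$, let $U=\langle n\rangle-I$, and let $G^-$ be the join of the empty graph on $I$ with the complete graph on $U$; equivalently, $G^-$ is $K_n$ with exactly the $\binom{t+1}{2}$ edges inside $I$ deleted. Then $I$ is the unique maximum independent set of $G^-$, so $\alpha(G^-)=t+1$, while adding $(r,s)$ breaks $I$ and gives $\alpha(G^-\cup G_{rs})=t$. Applying (\ref{h1}) to both graphs, together with the monotonicity $\phi(G^-)\subseteq\phi(G^-\cup G_{rs})$ and (\ref{eqh1}), yields $\alpha(\phi(G^-\cup G_{rs}))=t$ and $\alpha(\phi(G^-))\ge t+1$. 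Since $\phi(G^-\cup G_{rs})=\phi(G^-)\cup\phi(G_{rs})$ and $\phi(G_{rs})$ carries the separate edges $(a,b),(c,d)$, passing from $\phi(G^-)$ to $\phi(G^-\cup G_{rs})$ amounts to adding these two disjoint edges; if a maximum independent set of $\phi(G^-)$ meets all of $a,b,c,d$, then destroying both new edges costs two vertices, forcing $\alpha(\phi(G^-\cup G_{rs}))\le\alpha(\phi(G^-))-2\le t-1$, contradicting $\alpha(\phi(G^-\cup G_{rs}))=t$.

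The main obstacle is precisely the clause ``if a maximum independent set of $\phi(G^-)$ meets all four vertices $a,b,c,d$.'' Because $\phi$ is not given explicitly one cannot inspect the maximum independent sets of $\phi(G^-)$, and adding two disjoint edges to an arbitrary graph may lower its independence number by $0$, $1$, or $2$. I would resolve this using the rigidity of the gadget together with the covering identity $\bigcup_{i<j}\phi(G_{ij})=K_n$ furnished by $\phi(K_n)=K_n$: fixing any independent set $J$ of $\phi(G^-)$ with $|J|\ge t+1$, every pair inside $J$ is an edge of $K_n$ but a non-edge of $\phi(G^-)$, hence must be supplied by $\bigcup_{i,j\in I}\phi(G_{ij})$, the images of the deleted clique on $I$; this ties the location of $(a,b),(c,d)$ to $J$. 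By letting the critical edge range over all pairs in $I$ and comparing the resulting identities $\alpha(\phi(G^-)\cup\phi(G_{ij}))=t$, one should be able to force a configuration in which both $(a,b)$ and $(c,d)$ lie in, and are simultaneously activated on, a common maximum independent set of $\phi(G^-)$, producing the drop of two. Carrying out this localization cleanly, possibly with an edge-count estimate from Corollary~\ref{co1} to control $\alpha(\phi(G^-))$, is the heart of the argument, while Lemma~\ref{lehh1} and the elementary bounds (\ref{eqh1}) and Lemma~\ref{le31} enter only as supporting bookkeeping.
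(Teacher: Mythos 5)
There is a genuine gap, and you have in fact pointed at it yourself: the entire argument hinges on forcing $\alpha(\phi(G^-\cup G_{rs}))\le\alpha(\phi(G^-))-2$, and that step is never carried out. Worse, the conditional you state for it is logically incorrect: the existence of \emph{one} maximum independent set of $\phi(G^-)$ containing all of $a,b,c,d$ does not force a drop of two, because a \emph{different} maximum (or near-maximum) independent set of $\phi(G^-)$ may avoid both new edges entirely and survive intact in $\phi(G^-)\cup\phi(G_{rs})$. To get a drop of two you would need every independent set of size $\alpha(\phi(G^-))-1$ to contain at least one of the two new edges, which is a far stronger statement and is exactly what your ``localization'' paragraph would have to establish; as written, that paragraph is a plan (``one should be able to force a configuration\dots'') rather than a proof. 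Note also that the benign outcome $\alpha(\phi(G^-))=t+1$ with a drop of exactly one is perfectly consistent with everything you have derived, so no contradiction is reached.

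The paper avoids this local analysis entirely with a global counting argument. It embeds the two separate edges of $\phi(G_{i_1j_1})$ into a copy of $H\cong T'(n,t)$, the unique edge-minimum graph with independence number $t$ (Corollary~\ref{co1}), and then covers the remaining $\gamma-2$ edges of $H$ by images $\phi(G_{i_kj_k})$, one new edge of $H$ per step. The point is that the preimage $G'=\bigcup_{p=1}^{\gamma-1}G_{i_pj_p}$ has at most $\gamma-1<|E(T'(n,t))|$ edges --- the separate pair bought two edges of $H$ for the price of one preimage edge --- so Corollary~\ref{co1} forces $\alpha(G')>t$, while $H\subseteq\phi(G')$ forces $\alpha(\phi(G'))\le t$; either $\alpha(\phi(G'))=t$ contradicts (\ref{h1}) directly, or one adds edges to $G'$ to reach independence number exactly $t$ while the image stays below $t$. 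If you want to salvage your gadget approach you would need to supply the missing rigidity argument; otherwise the Tur\'an-type edge count is the mechanism that actually closes the proof.
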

 \begin{proof}
 If $t=n-1$, then $G\in \G_n$ contains no separate edges  when $\alpha(G)=t$, and hence the result holds. Now we assume $1\le t\le n-2$.
 	
 Suppose there exist $i_1,j_1\in\langle n\rangle$ such that $\phi(G_{i_1j_1})$ contains two separate edges $(s_1,t_1)$ and $(s_2,t_2)$.  Without loss of generality, we may assume $(s_1,t_1)$ and $(s_2,t_2)$ are edges in a graph $H\cong T'(n,t)$,  since we can obtain $H$ by adding edges to $G_{s_1t_1} \cup G_{s_2t_2}$.

 Since $\phi$ is complete, for any $p,q\in \langle n \rangle$, there is a graph $G_{uv}$ such that $(p,q)\in E(\phi(G_{uv}))$.  Let $\gamma$ be the size of  $H$.
 We construct new graphs $G_k$ in $\phi(\G_n)$ for $k=1,\ldots, \gamma-1$ as follows.
 Take $G_1=\phi(G_{i_1j_1})$.    For $k\ge 2$, if $E(H)-E(G_{k-1})=\emptyset$, we set $G_k=G_{k-1}$. Otherwise, we choose $(s_{k+1},t_{k+1})\in E(H)-E(G_{k-1})$ and find a graph $G_{i_kj_k}$ such that $(s_{k+1},t_{k+1})\in E(\phi(G_{i_kj_k}))$. Let $G_k=G_{k-1}\cup \phi(G_{i_kj_k})$. Denote $G'=\bigcup_{p=1}^{\gamma-1}G_{i_pj_p}$. Then
 \begin{equation}\label{eqh21}
 	G_{\gamma-1}=\phi(G')
 	\end{equation}
 and $ H\subseteq G_{\gamma-1}$. By (\ref{eqh1}) we see that
 	$$\alpha(G_{\gamma-1})\le t{\rm ~and~} \alpha(G_1)\ge \alpha(G_2)\ge\cdots\ge \alpha(G_{\gamma-1}).$$
 	Moreover, since the size of $G'$ is less than $\gamma$, applying Corollary \ref{co1} we have
 	\begin{equation}\label{eqh22}
 	\alpha(G')>t.
 	\end{equation}

If $\alpha(G_{\gamma-1})= t$,
then (\ref{eqh21})   and (\ref{eqh22})   contradicts (\ref{h1}). If $\alpha(G_{\gamma-1})< t$, then we can add edges to $G'$ one by one to get a graph $H_1$ with  $\alpha(H_1)=t$ and  $\alpha(\phi(H_1))<t$, which contradicts (\ref{h1}).

Therefore, $\phi(G_{ij})$ contains no separate edges for all $ i,j\in  \langle n\rangle$ with $i\ne j$.
 \end{proof}

  If  a graph $G$ contains a vertex $u$  such that $E(G)=\{(u,x): x\in V(G)\setminus\{u\}\}$, then $G$ is said to be a {\it star} with {\it center} $u$.

\begin{lemma}\label{leh2}
	Let $n,t$ be integers such that  $1\le t\le n-1$. Suppose $\phi: \G_n\rightarrow \G_n$ is a complete linear map satisfying (\ref{h1}).  Then
	\begin{equation}\label{eq41}
	| E(\phi(G_{ij}))|=1~~ for~ all~  i,j\in  \langle n\rangle~with~i\ne j.
	\end{equation}
 \end{lemma}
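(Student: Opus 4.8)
The plan is to combine the two previous lemmas with the two extremal tools, Corollary \ref{co1} and Lemma \ref{leh1}, according to the size of $t$. By Lemma \ref{lehh1} the edge set $\Phi(e):=E(\phi(G_{ij}))$ is nonempty for every edge $e=(i,j)$, and by Lemma \ref{hle2} no two of its edges are separate; since a nonempty family of pairwise adjacent edges is a star or a triangle, each $\Phi(e)$ is a star or a triangle. Thus (\ref{eq41}) fails exactly when some $\Phi(e_0)$ is a star with at least two leaves or a triangle, and I assume this for contradiction.

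The driving mechanism is the one already used in Lemma \ref{hle2}. Since $\phi(K_n)=K_n$ and $\phi$ is linear, $\bigcup_{e}\Phi(e)=E(K_n)$, so every edge of $K_n$ has a preimage. If for some copy $H\cong T'(n,t)$ one can cover $E(H)$ by the images of fewer than $m_0:=|E(T'(n,t))|$ edges, set $G'$ equal to the union of those preimage edges; then $|E(G')|<m_0$ forces $\alpha(G')>t$ by Corollary \ref{co1}, while $\phi(G')\supseteq H$ gives $\alpha(\phi(G'))\le t$. Adding edges to $G'$ one at a time and using (\ref{eqh1}) exactly as in Lemma \ref{hle2} then produces a graph $H_1$ with $\alpha(H_1)=t$ but $\alpha(\phi(H_1))<t$, contradicting (\ref{h1}). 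A saving of one preimage edge occurs precisely when two edges of a single image $\Phi(e_0)$ lie in $E(H)$.

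For small $t$, namely when $T'(n,t)$ has a part of size at least three (equivalently $\lceil n/t\rceil\ge 3$, i.e. $t<n/2$), this saving is available: two edges of $\Phi(e_0)$ are adjacent and have at most three endpoints, so I may choose the copy $H$ whose parts are the cliques of $T'(n,t)$ with one part containing all these endpoints; then both edges lie in $E(H)$ and the mechanism above yields a contradiction. Hence $|\Phi(e_0)|=1$ throughout this range.

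For large $t$ (all parts of $T'(n,t)$ of size at most two, so $t\ge n/2$) the saving is genuinely unavailable — a star has independence number $n-1$, and a bounded number of triangles cannot lower $\alpha$ to $t$ using fewer than $m_0$ edges — and this is the main obstacle. Here I would instead exploit the rigidity of Lemma \ref{leh1}: for $t\ge n/2+1$ every graph with $\alpha=t$ has all its maximum independent sets sharing at least $2t-n$ vertices, so the same holds for $\phi(G)$ whenever $\alpha(G)=t$. A triangle image is excluded first by testing $\phi$ on the single edge $G_{e_0}$: then $\alpha(\phi(G_{e_0}))=n-2$ while $\alpha(G_{e_0})=n-1$, which already contradicts (\ref{h1}) unless $t\le n-3$, the remaining range being handled by the rigidity. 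To exclude a star with a surplus leaf, I would construct a graph $G$ with $\alpha(G)=t$ containing $e_0$ for which the two leaves at the center of $\Phi(e_0)$ yield two maximum independent sets of $\phi(G)$ whose common part drops below $2t-n$, contradicting Lemma \ref{leh1}. The delicate points, which I expect to absorb most of the work, are the explicit construction of these witnessing graphs and the boundary value $t=n/2$, where both the Tur\'an saving and Lemma \ref{leh1} are vacuous and a separate direct argument is required.
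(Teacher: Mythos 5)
Your reduction to the case where $\phi(G_{ij})$ is a star or a triangle is correct, and your treatment of the range $t<n/2$ (choosing a copy of $T'(n,t)$ with one part containing the at most three endpoints of two adjacent image edges, then running the Tur\'an--saving argument of Lemma \ref{hle2}) is exactly the paper's first case. The problem is the range $t\ge n/2$, which is where the real content of the lemma lies, and there your text is a plan rather than a proof. For the triangle case you only observe that $t=n-1$ and $t=n-2$ are immediately contradictory and then say the rest is ``handled by the rigidity''; the actual argument (in the paper) requires Lemma \ref{leh1} to produce two vertices $u_1,v_1$ common to all maximum independent sets of $\phi(G_{ij})$, completeness of $\phi$ to find a preimage edge whose image covers $(u_1,v_1)$, and then an iteration with a three-way case split on whether $\alpha$ of the image lands on, below, or above $t$. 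None of that is in your write-up.

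The star case is worse. Your stated mechanism --- exhibit two independent sets of size $t$ in $\phi(G)$ with intersection below $2t-n$, hence $\alpha(\phi(G))>t$ --- could in principle work, but you give no construction of the witnessing graph $G$, and a single surplus leaf in one image $\phi(G_{ij})$ gives you almost no control over $\phi(G)$ for a large $G$: the other images could a priori be arbitrary stars or triangles elsewhere. The paper closes this by first proving a structural fact you do not have, namely that every $\phi(G_{ik})$ and $\phi(G_{jk})$ is a star with the \emph{same} center $s$, so that $\phi(G_{ij}\cup G_{ik}\cup G_{jk})$ is a single star with independence number $n-1$ while the preimage triangle has independence number $n-2$; this gap of one is then propagated by adding edges one at a time, using Lemma \ref{le31} (each added image, being a star, lowers $\alpha$ of the image by at most one) against (\ref{eqh1}) on the preimage side, until the preimage reaches $t$ and the image is still at least $t+1$. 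Finally, you explicitly leave the boundary value $t=n/2$ unresolved, but it is not an exceptional case needing ``a separate direct argument'': it is covered by the star/triangle analysis just described, which does not rely on the Tur\'an saving at all. As it stands the proposal establishes the lemma only for $t<n/2$.
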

 \begin{proof}
 	Since $\phi$ is complete, by Lemma \ref{lehh1} and Lemma \ref{hle2}, it suffices to prove that $\phi(G_{ij})$ contains no adjacent edges for all $ i,j\in  \langle n\rangle.$
 		If $ t\le\lceil\frac{n}{2}\rceil-1$, then $T'(n,t)$ contains adjacent edges. Applying the same arguments as in the proof of Lemma \ref{hle2} we can deduce the result.
 	
Now suppose $\lceil\frac{n}{2}\rceil\le t\le n-1$.
 By Lemma \ref{lehh1} and Lemma \ref{hle2}, we have  the following three possibilities:
 \begin{itemize}
 	\item[(i)] $\phi(G_{ij})$ is the union of a triangle and $n-3$ isolated vertices;
 	\item[(ii)] $\phi(G_{ij})$ is the union of a star   of order $\ge 3$ and some isolated vertices;
 	\item[(iii)] $\phi(G_{ij})$ is the union of an edge and $n-2$ isolated vertices.
 \end{itemize}

Suppose (i) holds. Then
\begin{equation}\label{eqh23}
\alpha(\phi(G_{ij}))=n-2{\rm ~ and~}  \alpha(G_{ij})=n-1.
\end{equation}
   Let $G_0=G_{ij}$. If $t=n-1$ or $n-2$, then (\ref{eqh23}) contradicts (\ref{h1}). Therefore, $$t\le n-3<\alpha(\phi(G_0)),$$ which implies $n\ge 6$. By Lemma \ref{leh1}, there are two vertices $u_1,v_1\in \langle n\rangle$ such that they are in all maximum independent sets of $\phi(G_0)$.  Since $\phi$ is complete, there exist $s_1,t_1\in \langle n\rangle$ such that  $G_{u_1v_1}\subset\phi(G_{s_1t_1})$.
Denote by $G_1=G_0\cup G_{s_1t_1}$. Then $$\alpha(\phi(G_1))<\alpha(\phi(G_0))=n-2 \le \alpha(G_1).$$
If $\alpha(\phi(G_1))=t$, then $\alpha(G_1)>t$ contradicts (\ref{h1}). If $\alpha(\phi(G_1))<t$, then  we may add new edges to $G_1$ to construct a graph $H_1$ such that
$$\alpha(H_1)=t {\rm~and ~}\alpha(\phi(H_1))\le\alpha(\phi(G_1))<t,$$ which also contradicts (\ref{h1}).
If $\alpha(\phi(G_1))>t$, then repeating the above process by adding new edges to $G_1$ we can deduce a contradiction. Therefore, (i) does not hold.

Suppose (ii) holds. Let $s$ be the center of $\phi(G_{ij}) $.  Firstly, we conclude that all edges of $\phi(G_{ik})$ and $\phi(G_{jk})$ are incident with $s$ for all $k\in \langle n\rangle\setminus\{i,j\}$, i.e., $\phi(G_{ik})$  and $\phi(G_{jk})$  are two  stars with center $s$.  Otherwise,  suppose an edge of $\phi(G_{ik})$ or $\phi(G_{jk})$ is not incident with $s$ for some $k\in \langle n\rangle\setminus\{i,j\}.$ Let
$G=G_{ij}\cup G_{ik}$ and $H=G_{ij}\cup G_{jk}$. Then by Lemma \ref{le31}, we have
\begin{equation}\label{eq411}
\alpha(\phi(G))=n-2, ~{ ~}\alpha(G)=n-1
\end{equation}or
\begin{equation}\label{eq412}
\alpha(\phi(H))=n-2, ~{ ~}\alpha(H)=n-1.
\end{equation}
If $t=n-1$ or $n-2$, then one of (\ref{eq411}) and (\ref{eq412}) contradicts (\ref{h1}).
If  $t<n-2$, then we can apply the same argument on $G$ or $H$ as in the previous paragraph to deduce a contradiction.

  Now let $G_1=G_{ij}\cup G_{ik}\cup G_{jk}$ with $k\in \langle n\rangle\setminus\{i,j\} $. Then  $\phi(G_1)$ is a star with center $s$ and
  $$\alpha(\phi(G_1))=n-1,\quad \alpha(G_1)=n-2.$$
  Applying Lemma \ref{leh1}, we can  add edges to $G_1$ one by one to
  obtain new graphs $G_2,\ldots, G_{n-t-1}$ such that
  $$\alpha(G_{u})=\alpha(G_{u-1})-1=n-u-1, {\rm ~for~}u=2,\ldots,n-t-1.$$
  On the other hand, by the previous arguments we see that    $\phi(G_{pq})$ is a star  for all $p,q\in \langle n\rangle$. Applying Lemma \ref{le31} we have
    $$ \alpha(\phi(G_{u}))\geq\alpha(\phi(G_{u-1}))-1{\rm ~for~}u=2,\ldots,n-t-1.$$
   Hence we have
   $$\alpha(G_{n-t-1})=t {\rm ~and~} \alpha(\phi(G_{n-t-1}))\geq t+1,$$
  a contradiction.  Hence, (ii) does not hold.

 Therefore,   $\phi(G_{ij})$ does not contain adjacent edges and we have (\ref{eq41}).
\end{proof}
From Lemma \ref{leh2} we have the following corollary.
\begin{corollary}\label{coh2}
Let $n,t$ be integers such that  $1\le t\le n-1$.  Then a complete linear map on $\G_n$ satisfying (\ref{h1}) is bijective.
\end{corollary}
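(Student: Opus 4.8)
The plan is to reduce the bijectivity of $\phi$ to an elementary finiteness argument by exploiting the structural fact, already established in Lemma \ref{leh2}, that $\phi$ carries each single-edge graph to a single-edge graph. Since $1\le t\le n-1$, Lemma \ref{leh2} gives $|E(\phi(G_{ij}))|=1$ for all distinct $i,j\in\langle n\rangle$, so I may define a map $\bar\phi$ on the edge set of $K_n$ by declaring $\bar\phi(i,j)$ to be the unique edge of $\phi(G_{ij})$. The whole argument then rests on transferring properties between $\phi$, acting on graphs, and $\bar\phi$, acting on edges.

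First I would record that linearity lets one compute $\phi$ edgewise: every $G\in\G_n$ satisfies $G=\bigcup_{(i,j)\in E(G)}G_{ij}$, so
$$E(\phi(G))=\bigcup_{(i,j)\in E(G)}E(\phi(G_{ij}))=\{\bar\phi(i,j):(i,j)\in E(G)\}=\bar\phi(E(G)).$$
In other words, $\phi$ acts on edge sets exactly as the set map induced by $\bar\phi$.

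Next I would use completeness to show $\bar\phi$ is a bijection of the edge set. Applying the displayed identity to $G=K_n$ and using $\phi(K_n)=K_n$ gives $\bar\phi(E(K_n))=E(K_n)$, i.e. $\bar\phi$ is a surjection from the finite set $E(K_n)$ onto itself; hence $\bar\phi$ is a bijection. Injectivity of $\phi$ then follows immediately: if $\phi(G)=\phi(H)$, the edgewise identity yields $\bar\phi(E(G))=\bar\phi(E(H))$, and since $\bar\phi$ is injective on edges this forces $E(G)=E(H)$, so $G=H$. An injective self-map of the finite set $\G_n$ is automatically surjective, hence bijective, which is the assertion.

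The only genuine subtlety is the single-edge property of $\phi(G_{ij})$, and it is already handled upstream; once Lemma \ref{leh2} is granted, no real obstacle remains, as everything reduces to the fact that a surjective (equivalently injective) self-map of a finite set is a bijection. I would simply take care to state the edgewise identity $E(\phi(G))=\bar\phi(E(G))$ cleanly, since both the bijectivity of $\bar\phi$ on edges and the injectivity of $\phi$ on graphs are read off directly from it.
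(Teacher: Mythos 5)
Your argument is correct and is precisely the reasoning the paper leaves implicit when it states the corollary as an immediate consequence of Lemma \ref{leh2}: the single-edge property plus $\phi(K_n)=K_n$ makes the induced edge map a surjection of the finite set $E(K_n)$ onto itself, hence a bijection, and injectivity (hence bijectivity) of $\phi$ on the finite set $\G_n$ follows. The only point you gloss over is that $\phi$ must send the empty graph to the empty graph (needed for the edgewise identity in the degenerate case), but this is immediate since $\phi(\emptyset)\subseteq\phi(G_{ij})$ for every edge $(i,j)$ and the images $\phi(G_{ij})$ are distinct single edges.
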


\begin{lemma}\label{leh4}
 Let $n,t$ be integers such that  $2\le t\le n-1$. Suppose $\phi: \G_n\rightarrow \G_n$ is  a complete linear map satisfying (\ref{h1}).  Then for any distinct $i,j,k\in \langle n\rangle$, $E(\phi(G_{ij})\cup\phi(G_{ik}))$ consists of two adjacent edges.
 \end{lemma}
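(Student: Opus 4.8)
The plan is to upgrade the single‑value hypothesis (\ref{h1}) into a rigidity statement about cliques, and then finish by a one‑line counting estimate. By Lemma~\ref{leh2} every $\phi(G_{pq})$ is a single edge, and by Corollary~\ref{coh2} the map $\phi$ is bijective, so $\phi$ is an edge permutation: there is a permutation $\sigma$ of the edge set of $K_n$ with $E(\phi(G))=\sigma(E(G))$ for all $G\in\G_n$. In particular $\phi(G_{ij})$ and $\phi(G_{ik})$ are two \emph{distinct} edges, so it suffices to show they are adjacent; I argue by contradiction, assuming their edges are $(a,b)$ and $(c,d)$ with $a,b,c,d$ pairwise distinct. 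The first key step is to note that, since $\sigma$ permutes all edges of $K_n$, it commutes with complementation, $\overline{\phi(G)}=\phi(\overline{G})$. As $\alpha(G)=\omega(\overline{G})$, condition (\ref{h1}) becomes $\omega(\phi(H))=t\iff\omega(H)=t$ for every $H\in\G_n$; that is, $\sigma$ preserves the property ``clique number $=t$''.

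The second and main step turns this into clique rigidity. Suppose first $t\ge 3$. For a $t$-subset $S$, let $K_S\in\G_n$ be the clique on $S$ (other vertices isolated); then $\omega(K_S)=t$, so $\phi(K_S)$ has clique number $t$ and exactly $\binom{t}{2}$ edges. But a graph with clique number $t$ contains a $K_t$, which already uses $\binom{t}{2}$ edges, so $\phi(K_S)$ \emph{is} a clique $K_{\tau(S)}$ on some $t$-set $\tau(S)$. Thus $\sigma$ carries $t$-cliques to $t$-cliques, and $\tau$ is injective on $t$-subsets (if $\tau(S)=\tau(S')$ then $\sigma(E(K_S))=\sigma(E(K_{S'}))$, whence $S=S'$). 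Now every $t$-set $S$ containing $\{i,j,k\}$ gives a clique $K_S$ containing both $G_{ij}$ and $G_{ik}$, so $K_{\tau(S)}$ contains $(a,b)$ and $(c,d)$, forcing $\{a,b,c,d\}\subseteq\tau(S)$. Hence $\tau$ injects $\{S:\{i,j,k\}\subseteq S\}$ into $\{T:\{a,b,c,d\}\subseteq T\}$, yielding $\binom{n-3}{t-3}\le\binom{n-4}{t-4}$. This is impossible, since Pascal's rule gives $\binom{n-3}{t-3}=\binom{n-4}{t-4}+\binom{n-4}{t-3}>\binom{n-4}{t-4}$. The contradiction proves $\phi(G_{ij})$ and $\phi(G_{ik})$ are adjacent.

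For the leftover value $t=2$ the same idea runs with triangles in place of $t$-cliques. Here $\sigma$ preserves $\{\omega=2\}$ and fixes the edgeless graph, so it preserves $\{\omega\le 2\}$, i.e.\ triangle‑freeness; consequently $\sigma$ sends every triangle to a triangle (a three‑edge graph is triangle‑free unless it \emph{is} a triangle). Applying this to the triangle $G_{ij}\cup G_{ik}\cup G_{jk}$, its image is a triangle whose three edges include $(a,b)$ and $(c,d)$, so its three vertices contain the four distinct vertices $a,b,c,d$—again impossible. This completes all cases $2\le t\le n-1$ (the situation being vacuous for $n\le 3$, where separate edges do not exist).

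The crux I expect is the passage to clique rigidity: recognizing that (\ref{h1}), once rephrased through complementation as preservation of clique number $t$, pins $t$-cliques to $t$-cliques via the extremal observation that $\binom{t}{2}$ edges with clique number $t$ leave room for nothing but a clique. After that the counting is routine, and the only separate bookkeeping is the degenerate case $t=2$, handled by the trivial remark that $\sigma$ also fixes the edgeless graph so that triangle‑freeness is preserved.
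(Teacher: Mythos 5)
Your proof is correct, and it takes a genuinely different route from the paper's. The paper, after reducing via Lemma \ref{leh2} to $\phi(G_{ij})=G_{ab}$, $\phi(G_{ik})=G_{cd}$ with $(a,b),(c,d)$ assumed separate, splits into $t\in\{n-1,n-2\}$ (where the two-edge graphs themselves already violate (\ref{h1})) and $2\le t\le n-3$, where it builds a copy $H\cong T'(n,t)$ containing $(a,b)$ and $(c,d)$ but missing the edge of $\phi(G_{jk})$, and invokes the equality case of Corollary \ref{co1} (Tur\'an): $\phi^{-1}(H)$ has the minimum number of edges for independence number $t$ yet contains $(i,j),(i,k)$ without $(j,k)$, so it is not a disjoint union of cliques and hence $\alpha(\phi^{-1}(H))\ne t=\alpha(H)$. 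You instead exploit that an edge permutation commutes with complementation (which is legitimate, and $\phi(\emptyset)=\emptyset$ does follow from bijectivity), convert (\ref{h1}) into preservation of the property $\omega=t$, show by the $\binom{t}{2}$-edge count that $t$-cliques map to $t$-cliques, and finish with the injection of $t$-sets containing $\{i,j,k\}$ into $t$-sets containing $\{a,b,c,d\}$, which Pascal's rule forbids; the residual case $t=2$ is settled by noting triangles must map to triangles. Your argument dispenses with Tur\'an's theorem entirely, replacing the extremal uniqueness of $T'(n,t)$ by an elementary counting step, while leaning somewhat harder on the global edge-permutation structure supplied by Lemma \ref{leh2} and Corollary \ref{coh2}; the paper's version stays closer to the extremal machinery it has already set up for the other lemmas. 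Both case analyses are sound and of comparable length.
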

\begin{proof}
	Let $i,j,k\in\langle n\rangle$ be distinct.
 By Lemma \ref{leh2}, $\phi(G_{ij})=G_{ab}, \phi(G_{ik})=G_{cd}$ for some $a,b,c,d \in \langle n\rangle$.
 Suppose $(a,b)$ and $(c,d)$ are separate. If $t=n-1$ or $n-2$, then
 $$\alpha(\phi(G_{ij}\cup G_{ik}))=n-2~{\rm and~} \alpha(G_{ij}\cup G_{ik})=n-1$$
contradicts (\ref{h1}).

 If $2\le t\le n-3$, suppose $\phi(G_{jk})=G_{uv}$ with $u,v\in \langle n\rangle$.
Then we can construct a graph  $H\cong T'(n,t)$ by adding edges to $\phi(G_{ij})\cup \phi(G_{ik})$ such that $(u,v)\not\in H$. Recall that any  minimum graph of order $n$ with independence number $t$ is  isomorphic to $T'(n,t)$. Since
$(i,j),(i,k)\in E(\phi^{-1}(H))$, $(j,k)\not\in E(\phi^{-1}(H))$  and $|E(\phi^{-1}(H))|=|E(H)|=|E(T'(n,t))|$, we see that $\phi^{-1}(H)$ is not  isomorphic to $T'(n,t)$. Hence we have
$$\alpha(H)=t~{\rm~and~} \alpha(\phi^{-1}(H))\ne t$$
a contradiction.

Therefore, $(a,b)$ and $(c,d)$ are adjacent edges and $E(\phi(G_{ij})\cup(\phi(G_{ik}))$ consists of two adjacent edges.
\end{proof}
Using a similar proof with \cite[Poroposition 3.15]{H} we have the following.
\begin{lemma}\label{hle5}
	Let $n,t$ be integers such that  $2\le t\le n-1$. Suppose $\phi: \G_n\rightarrow \G_n$ is a complete linear map satisfying (\ref{h1}).  Then for every $i\in\langle n\rangle$, there exists $i'\in\langle n\rangle$ such that
\begin{equation}\label{eqh5}
\phi(\bigcup\limits_{j=1,j\ne i}^{n}G_{ij})=\bigcup\limits_{j=1,j\ne i'}^{n}G_{i'j}.
\end{equation}
 \end{lemma}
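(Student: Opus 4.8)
The plan is to show that the $n-1$ distinct edges arising as the images of the edges of the star at $i$ all pass through a common vertex, which will then be the desired $i'$. First I would record the structural facts already available: by Lemma \ref{leh2} each $\phi(G_{ij})$ is a single edge, and by Corollary \ref{coh2} the map $\phi$ is injective, so the $n-1$ edges $\phi(G_{ij})$ with $j\ne i$ are pairwise distinct. By Lemma \ref{leh4}, any two of them, $\phi(G_{ij})$ and $\phi(G_{ik})$, are adjacent. Thus the set $\mathcal{F}=\{\phi(G_{ij}):j\ne i\}$ is a family of $n-1$ distinct, pairwise adjacent edges, and proving (\ref{eqh5}) amounts to proving that $\mathcal{F}$ is a star.

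Next I would invoke the elementary dichotomy for pairwise adjacent edges (this is where the argument parallels \cite[Proposition 3.15]{H}): a family of three or more distinct edges in which every two share a vertex is either concurrent, i.e.\ all its edges pass through one common vertex, or it consists of exactly three edges forming a triangle. The short proof runs as follows. If $e_1,e_2\in\mathcal{F}$ meet in $v$, write $e_1=\{v,a\}$ and $e_2=\{v,b\}$ with $a\ne b$; any further edge $e\in\mathcal{F}$ that avoids $v$ must meet $e_1$ at $a$ and $e_2$ at $b$, so $e=\{a,b\}$. Since two distinct edges cannot both equal $\{a,b\}$, at most one edge of $\mathcal{F}$ avoids $v$, and if one does then $e_1,e_2,\{a,b\}$ form a triangle to which no further edge can be adjacent, forcing $\mathcal{F}$ to be that triangle. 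Otherwise every edge of $\mathcal{F}$ passes through $v$ and $\mathcal{F}$ is a star.

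Finally I would rule out the triangle alternative using hypothesis (\ref{h1}). A triangle has exactly three edges, so when $n-1\ge 4$ (that is, $n\ge 5$) the family $\mathcal{F}$ of $n-1$ distinct edges cannot be a triangle and is therefore a star; when $n=3$ the family has only two edges and is trivially a star. The remaining case is $n=4$: here $\mathcal{F}$ could a priori be a triangle on three vertices together with one isolated vertex. But the star $\bigcup_{j\ne i}G_{ij}$ has independence number $n-1=3$, whereas a triangle plus an isolated vertex has independence number $2$; since $t\in\{2,3\}$, the equivalence $\alpha(\phi(G))=t\iff\alpha(G)=t$ would fail in either case, a contradiction. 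Hence $\mathcal{F}$ is always a star, and I call its center $i'$. Because $\mathcal{F}$ consists of $n-1$ distinct edges all incident to $i'$, and $i'$ is incident to exactly $n-1$ edges in total, $\mathcal{F}$ is precisely the full star at $i'$, which is (\ref{eqh5}). I expect the main obstacle to be the elimination of the triangle configuration: unlike the concurrency dichotomy, which is purely combinatorial, this step genuinely uses the independence-number hypothesis and must be checked separately for the small value $n=4$, where a triangle has the same number of edges as the star.
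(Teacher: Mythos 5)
Your proof is correct and follows essentially the same route as the paper: both arguments reduce to showing the $n-1$ pairwise adjacent image edges are concurrent rather than forming a triangle, rule out the triangle for $n=4$ by the same independence-number comparison ($3$ versus $2$, contradicting (\ref{h1}) for $t\in\{2,3\}$), and for $n\ge 5$ observe that no fourth edge can be adjacent to all three sides of a triangle. Your packaging of this as a general star-or-triangle dichotomy for intersecting edge families is only a cosmetic reorganization of the paper's argument.
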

 \begin{proof}
 	By Lemma \ref{leh4}, the result is clear for $n=3$. Now we suppose $n\ge 4$.
Given any $i\in \langle n\rangle $, choose distinct  $j_1,j_2\in \langle n\rangle\setminus\{i\}$. By Lemma \ref{leh4}, there exist distinct $i',p,q\in\langle n\rangle$ such that
$$\phi(G_{ij_1})=G_{i'p}~{\rm and~}  \phi(G_{ij_2})=G_{i'q}.$$

Suppose there exists  $j_3\in\langle n\rangle\setminus\{i,j_1,j_2\}$ such that $\phi(G_{ij_3})=G_{uv}$ with $i'\notin\{u,v\}$. By Lemma \ref{leh4}, we have $G_{uv}=G_{pq}$.

If $n=4$, let $$G=G_{ij_1}\cup G_{ij_2}\cup G_{ij_3} {\rm ~ and~} H=G_{i'p}\cup G_{i'q}\cup G_{pq}.$$
Then $$\phi(G)=H, ~~\alpha(G)=3,~~\alpha(H)=2$$
contradicts (\ref{h1}) whenever $t=2$ or 3.

If $n\ge 5$, choose $j\in\langle n\rangle\setminus\{i,j_1,j_2,j_3\}$. By Lemma \ref{leh4}, we have $\phi(G_{ij})=G_{rs}$ with $(r,s)\in E(K_n)\setminus\{(i',p),(i',q),(p,q)\}$ being adjacent with $(i',p),(i',q)$ and $(p,q)$, which is impossible.

Therefore, for all $j\in \langle n\rangle\setminus \{i\}$, we have $ \phi(G_{ij})=G_{i'k}$ for some $k\in \langle n\rangle$. Applying Corollary \ref{coh2}  we get (\ref{eqh5}).
\end{proof}

\par

Now we are ready to present the proof of Theorem \ref{th1}.

{\it Proof of Theorem \ref{th1}.}
 The sufficiency part of this theorem is obvious. It suffices to prove the necessity part.

 Suppose $\phi$ is a complete linear map on $\G_n$ satisfying (\ref{h1}).
 If $t=1$, then by Lemma \ref{leh2} and Corollary \ref{coh2}, $\phi$ is an edge permutation.

Suppose $2\le t\le n-1$.
 Applying Lemma  \ref{hle5}, for each $i\in \langle n\rangle$, there exists $i'\in \langle n\rangle$ such that (\ref{eqh5}) holds.   Denote by
 $\sigma: \langle n\rangle \rightarrow \langle n\rangle$  the map such that
 $\sigma(i)=i'$. Since $\phi$ is bijective, $i'\ne j'$ whenever $i\ne j$. Hence, $\sigma$ is a permutation of $\langle n\rangle$. Applying Lemma \ref{hle5} we have
 $$\phi(G_{ij})=G_{i'j'}=G_{\sigma(i)\sigma(j)}~{\rm for~all~}i,j\in \langle n\rangle,$$
which means  $\phi$ is a vertex permutation.
\hspace{9.5cm} $\Box$

From Theorem \ref{th1} we have the following corollary.
\begin{corollary} \label{coh1}
		Let $n$ be  a positive integer.  Then a  linear map $\phi: \G_n\rightarrow \G_n$  satisfies
		\begin{equation}\label{eqco1}
		\alpha(\phi(G))=\alpha(G)~{for~all~} G\in \G_n
		\end{equation}
		if and only if $\phi$ is a vertex permutation.
\end{corollary}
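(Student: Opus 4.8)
The plan is to obtain this corollary as a consequence of Theorem \ref{th1}. The sufficiency is immediate: if $\phi$ is a vertex permutation with underlying permutation $\sigma$, then $\phi$ merely relabels vertices, so it carries independent sets to independent sets of the same cardinality, and hence $\alpha(\phi(G))=\alpha(G)$ for every $G\in\G_n$.

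For the necessity, suppose $\phi$ satisfies (\ref{eqco1}). The first step is to show that $\phi$ is a \emph{complete} linear map, so that Theorem \ref{th1} becomes applicable. Since $K_n$ is the unique graph in $\G_n$ with independence number $1$, and (\ref{eqco1}) forces $\alpha(\phi(K_n))=\alpha(K_n)=1$, we conclude $\phi(K_n)=K_n$; hence $\phi$ is complete. Next, I would observe that (\ref{eqco1}) implies condition (\ref{h1}) for every $t$ with $1\le t\le n-1$, because the biconditional ``$\alpha(\phi(G))=t$ if and only if $\alpha(G)=t$'' is just a special instance of the identity $\alpha(\phi(G))=\alpha(G)$.

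When $n\ge 3$, I would then fix $t=2$, which satisfies $2\le t\le n-1$, and invoke Theorem \ref{th1}(ii) to conclude directly that $\phi$ is a vertex permutation. The only remaining work is to treat the degenerate cases $n=1$ and $n=2$ by hand, since for these values there is no $t$ with $2\le t\le n-1$. Here $\G_n$ is small enough that the $\alpha$-preserving requirement pins $\phi$ down to the identity, which is trivially a vertex permutation; for $n=2$, for instance, $\phi$ must fix both the empty graph and the single edge $G_{12}=K_2$, as these are the unique graphs with $\alpha=2$ and $\alpha=1$ respectively.

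Since essentially all of the content is already supplied by Theorem \ref{th1}, there is no genuine obstacle in this corollary. The only points requiring care are establishing completeness at the outset and noting that the admissible range $2\le t\le n-1$ is nonempty, which is exactly why the cases $n\le 2$ must be isolated and verified separately.
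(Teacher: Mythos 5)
Your proposal is correct and follows essentially the same route as the paper: deduce $\phi(K_n)=K_n$ from the uniqueness of the graph with independence number $1$, observe that (\ref{eqco1}) yields (\ref{h1}) for every $t$, and then invoke Theorem \ref{th1} with some $t\ge 2$. The only difference is that you explicitly isolate and verify the degenerate cases $n\le 2$, a point the paper passes over silently.
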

\begin{proof}
	Note that the only graph in $\G_n$ with independence number one is $K_n$.  (\ref{eqco1}) implies that $\phi$ is complete and (\ref{h1}) holds for all $t\in\langle n\rangle$. Applying Theorem \ref{th1} we see that (\ref{eqco1})  holds if and only if $\phi$ is a vertex permutation.
\end{proof}

{\it Remark.}  Let $n$ and $t$ be positive integers such that $2\le t\le n$. Applying similar arguments with Theorem \ref{th1},
we can characterize the complete linear maps such that
 $$\omega(\phi (G))=t~{\rm\iff} ~\omega(G)=t~{\rm~for~all~}G\in \G_n.$$

\section{Proof of Theorem \ref{mth1}}

Now  we  present a result on linear maps on matrices, which is stronger than Theorem \ref{mth1}.

\begin{theorem}\label{mth2}
	Let $n$ and $t$ be positive integers with $2\le t\le n-1$, and let $H\in S_n^+$ with all off-diagonal entries nonzero.  Then  a linear map $\phi: S_n^+\rightarrow S_n^+$  satisfies $\phi(J_n-I_n)=H$ and
 \begin{equation}\label{eqhm2}
	\alpha(\phi(X))=t~\iff \alpha(X)=t {\quad\rm for~ all\quad}X\in S_n^+.
	\end{equation}if and only if there is a permutation matrix $P$   such that
	$$\phi(X)=H\circ (P^TXP){\quad\rm for~ all\quad}X\in S_n^+.$$
\end{theorem}

\begin{proof}
 The sufficiency is obvious. For the necessity, suppose $\phi$ is a linear map on $S_n^+$ satisfying $\phi(J_n-I_n)=H$ and  (\ref{eqhm2}). We define a map $\varphi: \G_n\rightarrow \G_n$ as
$$\varphi(X)=G(\phi(A(X)))\quad {\rm for~ all\quad} X\in \G_n.$$
Then for any pair of graphs $X,Y\in \G_n$, we have
$$\varphi(X\cup Y)=G(\phi(A(X\cup Y)))=G(\phi(A(X)))\cup G(\phi(A(Y)))=\varphi(X)\cup \varphi(Y).$$
Notice that $$\alpha(B)=\alpha(G(B)) \quad \rm{ for~ all}\quad  B\in S^+_n. $$
We have
$$\alpha(X)=\alpha(A(X))=\alpha(\phi(A(X)))=\alpha(G(\phi(A(X))))=\alpha(\varphi(X))$$
for all $X\in \G_n$.
Moreover,
since $\phi(J_n-I_n)=H$, we have
$$\varphi(K_n)=G(\phi(A(K_n)))=G(\phi(J_n-I_n))=G(H)=K_n.$$
Therefore, $\varphi$ is a completely linear map on $\G_n$ satisfying (\ref{h1}). By Theorem \ref{th1}, $\varphi$ is a vertex permutation.  Therefore, there is a permutation matrix $P$ such that
$$A(\varphi(X))=P^TA(X)P\quad {\rm for~all\quad} X\in \G_n.$$

Denote by $sgn(B)=A(G(B))$ the sign matrix of $B\in S_n^+$. Then for any matrix $B\in S_n^+$,  we have $B\equiv(b_{ij})=\sum_{i<j}b_{ij}K_{ij}$, where $K_{ij}=E_{ij}+E_{ji}$ with $E_{ij}$ being the 0-1 matrix with its $(i,j)$-entry equal to 1 and all other entries zero.
Since $B$ and $\phi(K_{ij})$ are all in $S_n^+$, we have
\begin{eqnarray*}
sgn(\phi(B))
&=&sgn\left(\sum_{i<j}b_{ij}\phi(K_{ij})\right)=sgn\left(\sum_{i<j, b_{ij}\ne 0}\phi(K_{ij})\right)\\
&=&sgn\left(\phi(\sum_{i<j, b_{ij}\ne 0}K_{ij})\right)=sgn(\phi(sgn(B))).
\end{eqnarray*}
Therefore,
$$sgn(\phi(B))=sgn(\phi(sgn(B)))=A(\varphi(G(B)))=P^TA(G(B))P=P^Tsgn(B)P.$$ It follows that there is a  matrix $R_B\in S_n^+$ such that
\begin{equation}\label{eq425}
\phi(B)=R_B\circ (P^TBP).
\end{equation}

Now we prove that the matrices $R_B$'s can be chose to be a uniform matrix. By (\ref{eq425}) we have $\phi(K_{ij})=r_{ij}P^TK_{ij}P$ with $r_{ij}$ being a positive number for all $i<j$. Let $R=(r_{ij})_{n\times n}\in S_n^+$. Then for every $B=(b_{ij})\in S_n^+$,
we have
$$\phi(B)=\sum_{i<j}b_{ij}\phi(K_{ij})=\sum_{i<j}b_{ij}r_{ij}P^TK_{ij}P=P^T(R\circ B)P=(P^TRP)\circ (P^TBP).$$

Note that $\phi(J_n-I_n)=H$. We have $P^TRP=H$ and
$$\phi(X)=H\circ (P^TXP)\quad {\rm for~all\quad}X\in S_n^+.$$
This completes the proof.
\end{proof}

\par

Now we are ready to present the proof of Theorem \ref{mth1}.

{\it Proof of Theorem \ref{mth1}.} The sufficiency is clear. We prove the necessity.  By (\ref{eqhm1}), we have $\alpha(\phi(X))=1$ whenever $\alpha(X)=1$. Since $\alpha(\phi(J_n-I_n))=\alpha(J_n-I_n)=1$, all off-diagonal entries of  $H=\phi(J_n-I_n)$ are positive. Applying Theorem \ref{mth2},  there is a permutation matrix $P$   such that
	$$\phi(X)=H\circ (P^TXP){\quad\rm for~ all\quad}X\in S_n^+.$$
This completes the proof. \hspace{12cm}$\Box$

\section*{Acknowledgement}
This work was  supported by the National Natural Science Foundation of China (No. 12171323), the Science and  Technology Foundation of Shenzhen City (No. JCYJ20190808174211224) and Guangdong Basic and Applied Basic Research
Foundation (No. 2022A1515011995).

\end{document}